\documentclass{lmcs}
\pdfoutput=1

\usepackage{lastpage}
\lmcsdoi{16}{1}{22}
\lmcsheading{}{\pageref{LastPage}}{}{}%
{Apr.~23,~2019}{Feb.~20,~2020}{}

\keywords{syntactic method, G\"odel's System~$\T$, (uniform) continuity, Baire space, Cantor space, logical relation, Agda.}

\usepackage{hyperref}
\usepackage{breakurl}

\usepackage{bbm}

\theoremstyle{plain}

\def\eg{{\em e.g.}}
\def\ie{{\em i.e.}}

\newcommand{\T}{\mathrm{T}}
\newcommand{\N}{\mathbb{N}}
\newcommand{\suc}{\mathrm{succ}}
\newcommand{\rec}{\mathrm{rec}}
\newcommand{\ke}{\mathrm{ke}}
\newcommand{\rb}{\mathrm{b}}
\newcommand{\R}{\mathrm{R}}
\newcommand{\C}{\mathrm{C}}
\newcommand{\eqdef}{:\equiv}

\newcommand{\rbb}{\mathbf{{b}}}
\newcommand{\bke}{\mathbf{ke}}
\newcommand{\rV}{\mathrm{V}}
\newcommand{\rM}{\mathrm{M}}

\newcommand{\UC}{\mathrm{UC}}
\newcommand{\Two}{\mathbbm{2}}
\newcommand{\Cantor}{{\Two^\N}}

\begin{document}

\title[A syntactic approach to continuity of T-definable functionals]{A syntactic approach to continuity\\of {T}-definable functionals}
\author[C.~Xu]{Chuangjie Xu}
\address{Ludwig-Maximilians-Universit\"{a}t M\"{u}nchen%
}
\email{xu@math.lmu.de}

\begin{abstract}
We give a \emph{new} proof of the well-known fact that all functions $(\N \to \N) \to \N$ which are definable in G\"odel's System~$\T$ are continuous via a \emph{syntactic} approach. Differing from the usual syntactic method, we firstly perform a translation of System~$\T$ into itself in which natural numbers are translated to functions $(\N \to \N) \to \N$. Then we inductively define a continuity predicate on the translated elements and show that the translation of any term in System~$\T$ satisfies the continuity predicate. We obtain the desired result by relating terms and their translations via a parametrized logical relation. Our constructions and proofs have been formalized in the Agda proof assistant. Because Agda is also a programming language, we can execute our proof to compute moduli of continuity of $\T$-definable functions.
\end{abstract}

\maketitle


\section{Introduction}

The usual syntactic method for proving properties of terms in G\"odel's System~$\T$ works as follows: (1)~Define a predicate $P_\rho \subseteq \rho$ for the designated property by induction on the finite type~$\rho$, \emph{i.e.}
\[
\begin{array}{rll}
P_\N(n) & \eqdef & \cdots \\[2pt]
P_{\sigma \to \tau} (g) & \eqdef & \forall x^{\sigma} \left( P_{\sigma} (x) \to P_\tau (g(x)) \right).
\end{array}
\]
(2)~Prove that each term $t : \rho$ in~$\T$ satisfies $P_\rho$ by induction on~$t$. Examples of properties for which such syntactic methods work include totality~\cite{schwichtenberg:wainer:book} and majorizability~\cite{kohlenbach:book}. Our goal is to recover the well-known fact that all $\T$-definable functions $(\N \to \N) \to \N$ are continuous~\cite{beeson:foundations}. But the above syntactic method does not seem to work directly, because we do not know how to define the base case $P_\N$ such that $P_{(\N \to \N) \to \N}(f)$ expresses the continuity of~$f$.

Our idea is, as a step~(0), to perform a translation $(t \mapsto t^\rb) : \rho \to \rho^\rb$ of System~$\T$ into itself where $\N^\rb \eqdef (\N \to \N) \to \N$, \ie~natural numbers are translated to functions $(\N \to \N) \to \N$. Further, any function $f : (\N \to \N) \to \N$ in~$\T$ is pointwise equal to $f^\rb(\Omega)$, where $f^\rb : (\N^\rb \to \N^\rb) \to \N^\rb$ is the translation of~$f$ and $\Omega : \N^\rb \to \N^\rb$ is a $\T$-definable generic element in the sense of~\cite{cj10,cj12,escardo:dialogue}. Then our step~(1) is to inductively define a continuity predicate $\C_\rho \subseteq \rho^\rb$: the base case $\C_\N(f)$ states the continuity of~$f : (\N \to \N) \to \N$ and the one for function spaces is defined in the usual way as above. And our step~(2) is to prove $\C_\rho(t^\rb)$ for all terms $t : \rho$ in~$\T$ by induction on~$t$. Moreover, the generic element $\Omega$ also satisfies the continuity predicate~$\C$. We thus have $\C_\N(f^\rb(\Omega))$, \ie~$f^\rb(\Omega)$ is continuous, for any $f : (\N \to \N) \to \N$ in~$\T$. Because continuity is preserved under pointwise equality, we know that~$f$ is also continuous. As pointed out by one of the anonymous referees, our results that any term is related to its translation (Lemma~\ref{lm:logical:relation}) and that the translation of any term satisfies the continuity predicate (Lemma~\ref{lm:C}) are instances of the \emph{fundamental theorem of logical relations}~\cite{statman:logical}.

Our development is constructive and has been formalized in the Agda proof assistant~\cite{agda:wiki}. The main purpose of this formalization is to execute our Agda proof which is also a computer program to compute moduli of continuity, rather than merely certify the correctness of our work. Some sample computations of moduli of continuity are provided in our Agda development~\cite{xu:T:cont:agda}.

\subsection*{Motivation}
Oliva and Steila~\cite{oliva:steila:BRCT} give a direct proof of Schwichtenberg's theorem that the terms of G\"odel's System~$\T$ are closed under the rule of Spector's bar recursion of types 0 and 1~\cite{schwichtenberg:brct}. By induction on terms, they explicitly construct a functional of their notion of \emph{general bar recursion} for each term in~$\T$, and then turn it into a functional of Spector's bar recursion. The author implemented their work in Agda~\cite{xu:BRCT:agda} and recognized that some part of their correctness proof \cite[Theorem~3.4]{oliva:steila:BRCT} can be adapted to compute moduli of continuity. For each term~$t$ in~$\T$, a bar is constructed to control the behavior of the general-bar-recursion functional for~$t$. This bar essentially contains the continuity information of~$t$. The Agda code of this part of the correctness proof was then separated, refined and further developed under the inspiration of Escard\'o's Agda development of dialogue trees~\cite{escardo:dialogue}, which led to the first version of the Agda implementation of this paper~\cite{xu:T:cont:agda}.

\subsection*{Related work}
Kohlenbach~\cite{kohlenbach:majorization} obtains the uniform continuity of terms $(\N \to \N) \to \N$ in System~$\T$ as an application of his pointwise version of strong majorization. In particular, his logical relation for majorization on functions from natural numbers is given in a pointwise way. He shows that every term $t:\rho$ in~$\T$ is pointwise strongly majorized by some $t^*:\rho$ in~$\T$ by induction on~$t$. For any $f : (\N \to \N) \to \N$ in~$\T$, one can show in the intuitionistic system WE-HA$^\omega$ that it is extensional. From such a proof, Kohlenbach extracts a term via the Dialectica interpretation and then uses the majorant of this term to construct a modulus of uniform continuity of~$f$.

Coquand and Jaber's approach to continuity~\cite{cj10,cj12} is also syntactic. The difference is that they obtain continuity information by an operational method: they extend dependent type theory with a new constant~$\mathsf{f}$ for a generic element, and then decorate its operational semantics with forcing information to keep track of approximation information about~$\mathsf{f}$ as the computations proceed. The continuity information of a function~$F$ is extracted from the computation of $F(\mathsf{f})$. They tackle uniform continuity of functions from the Cantor space $\N \to \Two$, where $\Two \eqdef \{ 0,1 \}$, and also discuss how to adapt their argument for continuity of functions from the Baire space $\N \to \N$~\cite[Section~1.2.3]{cj12}. They also provide a Haskell implementation for System~$\T$ using a monad combined by the list monad and the state monad as an appendix in~\cite{cj12}. The algorithm to extract continuity information in their operational method restricted to System~$\T$ can be represented as a monadic translation of System~$\T$ into itself which is an instance of~\cite[Section~4]{powell:state}.

Inspired by Coquand and Jaber's work, Escard\'o also employs a generic element but in his dialogue tree model to prove (uniform) continuity of $\T$-definable functions~\cite{escardo:dialogue}. He has a concrete notion of generic element given by a function $\tilde{\N} \to \tilde{\N}$, where $\tilde{\N}$ is the set of dialogue trees. Suppose a function $f : (\N \to \N) \to \N$ is denoted by a term~$t$ whose dialogue interpretation is a function $\tilde{f} : (\tilde{\N} \to \tilde{\N}) \to \tilde{\N}$. Applying $\tilde{f}$ to the generic element, a dialogue tree which contains the (uniform) continuity information of~$f$ is obtained. In one version of the Agda implementation of~\cite{escardo:dialogue}, Escard\'o uses Church encodings of dialogue trees to turn his semantic interpretation into a compositional translation of System~$\T$ into itself. In this way, he extracts from the Church encoding of the dialogue tree interpreting~$t$ a term~$m$ in~$\T$ which internalizes the modulus of continuity of~$f$.  We provide a direct and explicit construction of terms in~$\T$ internalizing moduli of continuity in Section~\ref{sec:t:mod} .


Our method is also related to the sheaf model~\cite{xu:escardo:tlca2013,escardo:xu:kk,xu:phd} introduced by Escard\'o and the author. To prove uniform continuity of $\T$-definable functions, it is sufficient to work with the subcategory of concrete sheaves which admit a more intuitive description as what we call $\C$-spaces. A $\C$-space is a set~$X$ equipped with a certain collection of maps $\Cantor \to X$ which are called probes on~$X$, where $\Cantor \eqdef \N \to \Two$. A $\C$-continuous map of $\C$-spaces is a function whose composition with a probe is again a probe. For instance, the set~$\N$ with all uniformly continuous maps $\Cantor \to \N$ forms a $\C$-space; so does $\Cantor$ with all uniformly continuous maps $\Cantor \to \Cantor$. A crucial tool in this semantic method is the Yoneda Lemma which says that a function $\Cantor \to X$ into a $\C$-space $X$ is $\C$-continuous iff it is a probe on~$X$. Each term $t : (\N \to \Two) \to \N$ in~$\T$ is interpreted as a $\C$-continuous map $\Cantor \to \N$ in the category of $\C$-spaces and thus, by the Yoneda Lemma, a probe on~$\N$, \ie~a uniformly continuous function. Interestingly, the proof of one direction of the Yoneda Lemma is essentially the same proof of that $\Omega$ satisfies a certain uniform-continuity predicate studied in Section~\ref{sec:uc}. To investigate the relationship between our syntactic method and sheaf models~\cite{fourman:notions,fourman:cont:1,fourman:cont:2,vdh:moerdijk:sheaf} is left as one of our future tasks.

Another different approach to continuity is to use computational effects such as exceptions~\cite{longley:functional,rahli:bickford:cont}. Suppose a function $f : (\N \to \N) \to \N$ and a sequence $\alpha : \N \to \N$ are given. We can find a number $m$ such that the value of $f(\alpha)$ depends only on the first $m$ positions of $\alpha$ (\ie~a modulus of continuity of~$f$ at~$\alpha$) as follows: An exception is thrown if~$f$ attempts to compute $\alpha(n)$ for $n \geq k$ where $k$ is a variable parameter. We start with computing $f(\alpha)$ with $k=0$. Once an exception is caught, we try $k+1$. At some point no exception happens and the current value of~$k$ is a modulus of continuity of~$f$.


\subsection*{Organization} Section~\ref{sec:t} introduces the $\rb$-translation of System~$\T$ as a preliminary for the syntactic method. Section~\ref{sec:cont} employs the syntactic method on the $\rb$-translation of System~$\T$ to prove continuity of $\T$-definable functions $(\N \to \N) \to \N$. Section~\ref{sec:other:cont} strengthens the result by constructing terms in System~$\T$ which internalize moduli of continuity, and studies uniform continuity of $\T$-definable functions $(\N \to \Two) \to \N$. The last section discusses how to generalize the method for proving properties of $\T$-definable functions of arbitrary finite types.

\section{G\"odel's System~$\T$ and the $\rb$-translation}
\label{sec:t}
We work with G\"odel's System~$\T$ in its lambda-calculus form. Recall that the term language of~$\T$ is (equivalent to) a simply typed lambda calculus extended with natural numbers and a primitive recursor. The constants and equations (\ie~computational rules) associated to the ground type~$\N$ include
\begin{itemize}
\item the natural number $0 : \N$,
\item the successor function $\suc : \N \to \N$, and
\item the primitive recursor $\rec : \rho \to (\N \to \rho \to \rho) \to \N \to \rho$ with
\[
\rec(a)(f)(0) = a \qquad \rec(a)(f)(\suc \ n) = f(n)(\rec(a)(f)(n))
\]
for every finite type $\rho$.
\end{itemize}
A function is called $\T$-\emph{definable} if there exists a closed term in $\T$ denoting it. In the paper, we do not distinguish $\T$-definable functions and their corresponding $\T$-terms. Moreover, we may write $n+1$ rather that $\suc(n)$, and $\alpha_i$ rather than $\alpha(i)$ for $\alpha : \N \to \N$ and $i:\N$.

As discussed earlier, it does not seem possible to directly apply the syntactic method to prove continuity of $\T$-definable functions $(\N \to \N) \to \N$. Hence we `precook' $\T$-terms so that continuity becomes the base case of a predicate which all `precooked' terms will satisfy. We call this procedure the $\rb$-\emph{translation} where~$\rb$ stands for the Baire type/space, because natural numbers are translated to functionals from the Baire type $\N \to \N$.

\begin{defi}[$\rb$-translation]
\label{def:translation}
For each finite type $\rho$ we associate a finite type $\rho^{\rb}$ inductively as follows:
\[
\begin{array}{rll}
\N^\rb & \eqdef & (\N \to \N) \to \N \\[2pt]
(\sigma \to \tau)^\rb & \eqdef & \sigma^\rb \to \tau^\rb.
\end{array}
\]
Assume a given mapping of variables $x : \rho$ to variables $x^\rb : \rho^\rb$. For any term $t : \rho$ in $\T$, we define $t^\rb : \rho^\rb$ inductively as follows:
\[
\begin{array}{rll}
(x)^\rb & \eqdef & x^\rb \\[2pt]
(\lambda x.u)^\rb & \eqdef & \lambda x^\rb.u^\rb \\[2pt]
(fa)^\rb & \eqdef & f^\rb a^\rb \\[2pt]
0^\rb & \eqdef & \lambda \alpha.0 \\[2pt]
\suc^\rb & \eqdef & \lambda f \alpha. \suc(f\alpha) \\[2pt]
\rec^\rb & \eqdef & \lambda a f . \ke(\rec(a)(\lambda k.f(\lambda \alpha. k)))
\end{array}
\]
where $\ke_\rho : (\N \to \rho^\rb) \to \N^\rb \to \rho^\rb$ is inductively defined by
\[
\arraycolsep=3pt
\begin{array}{rll}
\ke_{\N}(g)(f) & \eqdef & \lambda \alpha. g(f\alpha)(\alpha) \\[2pt]
\ke_{\sigma \to \tau} (g)(f) & \eqdef & \lambda x. \ke_\tau (\lambda k. g(k)(x))(f).
\end{array}
\]
\end{defi}

In the above definition, the only difficulty arrises when translating the primitive recursor: To be a sound translation, $\rec^\rb : \rho^\rb \to (\N^\rb \to \rho^\rb \to \rho^\rb) \to \N^\rb \to \rho^\rb$ has to preserve the computational rules of $\rec$, \ie~$\rec^\rb$ should satisfy
\[
\rec^\rb(a)(f)(0^\rb) = a \qquad \rec^\rb(a)(f)(\suc \ n)^\rb = f(n^\rb)(\rec^\rb(a)(f)(n^\rb))
\]
where $k^\rb : \N^\rb$ is the constant function $\lambda \alpha. k$ for any $k : \N$.
One suitable candidate for such $\rec^\rb(a)(f) : \N^\rb \to \rho^\rb$ is $\rec(a)(\lambda k.f(k^\rb))$ but it has type $\N \to \rho^\rb$. In general, we can extend a function $g : \N \to \rho^\rb$ to $g^*: \N^\rb \to \rho^\rb$ such that $g^*(i^\rb) = g(i)$ for all $i : \N$, by induction on the finite type~$\rho$. We write $\ke$ to denote the extension function ($g \mapsto g^*$) as it behaves like a \emph{Kleisli extension} for functions $\N \to \rho^\rb$. However, in general, our $\rb$-translation does not seem to be a monad, let alone a functor.

Our first goal is to prove certain ``equality'' between any closed term $f: {(\N \to \N) \to \N}$ and its translation $f^\rb$ so that once $f^\rb$ satisfies a predicate for some property such as continuity in Section~\ref{sec:cont} then so does~$f$.  We firstly relate terms and their $\rb$-translations using the following \emph{parametrized logical relation} that was introduced in a version of the Agda implementation of~\cite{escardo:dialogue}.
\begin{defi}
\label{def:logical:relation}
For any $\alpha : \N \to \N$, we define a logical relation $\R_\rho^\alpha \subseteq \rho^\rb \times \rho$ by
\[
\begin{array}{rll}
f \ \R_\N^\alpha \ n & \eqdef & f(\alpha) = n \\[2pt]
g \ \R_{\sigma \to \tau}^\alpha \ h & \eqdef & \forall x^{\sigma^\rb}, y^\sigma  \left( x \ \R_\sigma^\alpha \ y \to g(x) \ \R_\tau^\alpha \ h(y) \right).
\end{array}
\]
We may omit the subscript and simply write $\R^\alpha$ if it can be inferred from the context.
\end{defi}

\begin{lem}
\label{lm:logical:relation}
For any term $t : \rho$ in $\T$, we have
\[
t^\rb \ \R^\alpha \ t
\]
for all $\alpha : \N \to \N$, assuming $x^\rb \ \R^\alpha \ x$ for all $x \in \mathrm{FV}(t)$.
\end{lem}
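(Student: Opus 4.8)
The plan is to prove the lemma by structural induction on the term $t$ --- this is precisely the \emph{fundamental theorem of logical relations} for the $\rb$-translation --- with essentially all the content concentrated in the case of the primitive recursor. First I would clear the routine cases. For a variable $t = x$, the goal $x^\rb \ \R^\alpha \ x$ is exactly the assumed hypothesis. For an application $t = fa$, the induction hypotheses give $f^\rb \ \R^\alpha \ f$ and $a^\rb \ \R^\alpha \ a$, hence $f^\rb a^\rb \ \R^\alpha \ fa$ by unfolding $\R^\alpha$ at the arrow type. For an abstraction $t = \lambda x.u : \sigma \to \tau$, I would take any $p \ \R_\sigma^\alpha \ q$ and invoke the induction hypothesis for $u$ with the free-variable hypothesis augmented by $p \ \R_\sigma^\alpha \ q$ in the slot of $x$; since $(\lambda x^\rb.u^\rb)p$ and $(\lambda x.u)q$ are the corresponding substitution instances and $\R^\alpha$ at base type is just an equation in $\N$ (hence stable under $\beta$-conversion), this yields $(\lambda x^\rb.u^\rb)p \ \R_\tau^\alpha \ (\lambda x.u)q$. (In an environment-based presentation, as in the Agda formalization, this step is literally the induction hypothesis applied to an extended environment.) For $0$ we have $0^\rb(\alpha) = 0$, so $0^\rb \ \R_\N^\alpha \ 0$; and for $\suc$, if $f(\alpha) = n$ then $\suc^\rb(f)(\alpha) = \suc(f\alpha) = \suc(n)$, so $\suc^\rb \ \R^\alpha \ \suc$.

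The hard part will be $t = \rec$, and the key to it is an auxiliary lemma about the Kleisli-style extension $\ke$, proved by induction on the finite type $\rho$: for all $g : \N \to \rho^\rb$, $f : \N^\rb$ and $y : \rho$,
\[
g(f\alpha) \ \R_\rho^\alpha \ y \quad\Longrightarrow\quad \ke_\rho(g)(f) \ \R_\rho^\alpha \ y.
\]
At base type this unfolds to $g(f\alpha)(\alpha) = y$, which is precisely the hypothesis; at $\sigma \to \tau$, applying $\ke_{\sigma\to\tau}(g)(f)$ to some $p \ \R_\sigma^\alpha \ q$ yields $\ke_\tau(\lambda k.\,g(k)(p))(f)$, and since $(\lambda k.\,g(k)(p))(f\alpha) = g(f\alpha)(p) \ \R_\tau^\alpha \ y(q)$, the inductive case of the lemma closes the goal. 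Morally this says that, as far as $\R^\alpha$ can observe, $\ke_\rho(g)(f)$ depends on $f$ only through the number $f\alpha$ --- which is exactly what is needed to reconcile the ``semantic numeral'' $m^\rb : (\N\to\N)\to\N$ (of which we only know $m^\rb(\alpha) = m$) with the genuine numeral $m$ that drives the original $\rec$.

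With this lemma in hand, the recursor case runs as follows. Fix $a^\rb \ \R_\rho^\alpha \ a$, $f^\rb \ \R^\alpha \ f$ and $m^\rb \ \R_\N^\alpha \ m$, so $m^\rb(\alpha) = m$, and set $g \eqdef \rec(a^\rb)(\lambda k.\,f^\rb(\lambda\alpha.k))$, so that $\rec^\rb(a^\rb)(f^\rb)(m^\rb) = \ke_\rho(g)(m^\rb)$. A side induction on $i : \N$ shows $g(i) \ \R_\rho^\alpha \ \rec(a)(f)(i)$: for $i = 0$ the two sides are $a^\rb$ and $a$; for $i = \suc j$ we have $g(\suc j) = f^\rb(\lambda\alpha.j)\bigl(g(j)\bigr)$ and $\rec(a)(f)(\suc j) = f(j)\bigl(\rec(a)(f)(j)\bigr)$, which are $\R_\rho^\alpha$-related by the side-induction hypothesis together with $f^\rb \ \R^\alpha \ f$ and $(\lambda\alpha.j) \ \R_\N^\alpha \ j$. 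Instantiating at $i = m = m^\rb(\alpha)$ and feeding the result to the $\ke$-lemma gives $\ke_\rho(g)(m^\rb) \ \R_\rho^\alpha \ \rec(a)(f)(m)$, which finishes the induction.

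I expect the recursor to be the only genuine obstacle: it forces the two nested inductions above --- one on the finite type to handle $\ke$, one on a numeral to handle $\rec$ --- and it requires recognizing the subterm $\lambda\alpha.k$ appearing inside $\rec^\rb$ (whose bound variable incidentally shares the name of the relation's parameter) as the constant function $k^\rb$, so that $f^\rb$ may legitimately be applied to it via its relatedness to $f$. Everything else is mechanical unfolding of the definition of $\R^\alpha$.
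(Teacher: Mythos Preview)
Your proof is correct and follows essentially the same approach as the paper: structural induction on $t$, with the only nontrivial case $t \equiv \rec$ handled by a type-induction lemma about $\ke$ together with a numeral induction relating $\rec(a^\rb)(\lambda k.\,f^\rb(\lambda\alpha.k))(i)$ to $\rec(a)(f)(i)$. The only cosmetic difference is that the paper states the $\ke$-lemma in the form ``if $g(i)\ \R^\alpha\ h(i)$ for all $i$, then $\ke(g)\ \R_{\N\to\rho}^\alpha\ h$'', whereas your pointwise version ``$g(f\alpha)\ \R_\rho^\alpha\ y \Rightarrow \ke_\rho(g)(f)\ \R_\rho^\alpha\ y$'' has a weaker hypothesis --- but once unfolded at the arrow type the two statements coincide for the purpose at hand.
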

\begin{proof}
Let $\alpha: \N \to \N$ be given. We carry out the proof by structural induction over~$t$. As the others are trivial, here we prove only the case $t \equiv \rec$ with the following claims:
\begin{enumerate}
\item The Kleisli extension $\ke$ preserves the relation $\R^\alpha$, \ie
\[
\ke(g) \ \R_{\N \to \rho}^\alpha \ h
\]
for any $g : \N \to \rho^\rb$ and $h : \N \to \rho$ with $g(i) \ \R^\alpha \ h(i)$ for all $i:\N$.
\begin{proof}
By induction on $\rho$.
\end{proof}

\item Given $x : \rho^\rb$ and $y : \rho$ with $x \ \R^\alpha \ y$, and $f : \N^\rb \to \rho^\rb \to \rho^\rb$ and $g : \N \to \rho \to \rho$ with $f \ \R^\alpha \ g$,
\[
\rec (x)(\lambda k.f(\lambda \alpha.k))(i) \ \R^\alpha \ \rec(y)(g)(i)
\]
for all $i:\N$.
\begin{proof}
By induction on $i$.
\end{proof}
\end{enumerate}
\noindent
By applying (1) to (2), we get a proof of $\rec^\rb \ \R^\alpha \ \rec$.
\end{proof}

Hence we have $f^\rb \ \R^\alpha \ f$ for any closed $f : (\N \to \N) \to \N$. Unfolding it according to Definition~\ref{def:logical:relation}, we can see that a term $\Omega : \N^\rb \to \N^\rb$ with $\Omega \ \R^\alpha \ \alpha$ is needed in order to get the equality $f^\rb(\Omega)(\alpha) = f(\alpha)$. Such a term $\Omega$ can be viewed as a \emph{generic element}~\cite{cj10,cj12} or \emph{generic sequence}~\cite{escardo:dialogue}, and can be easily defined by unfolding $\Omega \ \R^\alpha \ \alpha$: because $\Omega \ \R^\alpha \ \alpha$ is unfolded to
\[
\forall f^{(\N \to \N) \to \N}, n^\N \left( f(\alpha) = n \to \Omega(f)(\alpha) = \alpha(n) \right)
\]
by replacing $n$ by $f\alpha$ as they are equal by assumption, we define $\Omega : \N^\rb \to \N^\rb$ by
\[
\Omega(f)(\alpha) \ \eqdef \ \alpha(f\alpha).
\]
Then the following lemma is trivial but is necessary for deriving our first result.

\begin{lem}
\label{lm:generic}
For any $\alpha : \N \to \N$, we have
\[
\Omega \ \R^\alpha \ \alpha.
\]
\end{lem}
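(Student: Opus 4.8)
The plan is simply to unfold the parametrized logical relation at the type $\N \to \N$ and discharge the resulting equation. Fix $\alpha : \N \to \N$. By the function-type clause of Definition~\ref{def:logical:relation}, proving $\Omega \ \R_{\N \to \N}^\alpha \ \alpha$ amounts to taking arbitrary $x : \N^\rb$ and $y : \N$ with $x \ \R_\N^\alpha \ y$ and showing $\Omega(x) \ \R_\N^\alpha \ \alpha(y)$. By the base clause, the hypothesis $x \ \R_\N^\alpha \ y$ is precisely the equation $x(\alpha) = y$, and the goal $\Omega(x) \ \R_\N^\alpha \ \alpha(y)$ is precisely the equation $\Omega(x)(\alpha) = \alpha(y)$.

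It then remains to verify this last equation. Using the definition $\Omega(f)(\alpha) \eqdef \alpha(f\alpha)$ we compute $\Omega(x)(\alpha) = \alpha(x\alpha)$, and since $x\alpha = y$ by assumption we conclude $\alpha(x\alpha) = \alpha(y)$, as required.

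I do not expect any real obstacle here: as the discussion preceding the statement explains, the definition of $\Omega$ was obtained by reading off exactly what is needed to make $\Omega \ \R^\alpha \ \alpha$ hold, so the verification is forced. The only point that requires a little care is bookkeeping of which relation is being unfolded — here it is $\R_{\N \to \N}^\alpha$, whose second component applies $\alpha$ to the $\N$-argument rather than composing with it — but once the clauses of Definition~\ref{def:logical:relation} are instantiated correctly the argument is a single rewrite with the hypothesis. In the Agda formalization this corresponds to pattern matching on the proof of $x\alpha = y$ and closing the goal by \texttt{refl} (equivalently, applying \texttt{cong} with $\alpha$ to that proof).
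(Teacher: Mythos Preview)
Your proof is correct and is exactly the verification the paper has in mind: the text immediately preceding the lemma already performs the unfolding of $\R_{\N\to\N}^\alpha$ to $\forall f,n\,(f(\alpha)=n \to \Omega(f)(\alpha)=\alpha(n))$ and notes that the definition of $\Omega$ was chosen to make this hold, so the paper simply declares the lemma ``trivial'' without spelling out the argument. Your write-up just makes that triviality explicit.
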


Our first result follows directly from Lemmas~\ref{lm:logical:relation} and~\ref{lm:generic}.
\begin{thm}\label{thm:eq}
For any closed term $f : (\N \to \N) \to \N$ in $\T$,
\[
f^\rb(\Omega)(\alpha) = f(\alpha)
\]
for all $\alpha : \N \to \N$.
\end{thm}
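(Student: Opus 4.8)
The plan is to simply compose the two preceding lemmas, using the generic element $\Omega$ as the argument that bridges the $\rb$-translated world and the original one. First I would note that since $f$ is a closed term, the set $\mathrm{FV}(f)$ is empty, so the hypothesis of Lemma~\ref{lm:logical:relation} is vacuously met and we obtain $f^\rb \ \R^\alpha \ f$ for every $\alpha : \N \to \N$. Fixing an arbitrary such $\alpha$, I would then unfold this relation at the type $(\N \to \N) \to \N$, whose $\rb$-translation is $(\N^\rb \to \N^\rb) \to \N^\rb$; by Definition~\ref{def:logical:relation} the statement $f^\rb \ \R^\alpha \ f$ unfolds to: for all $x : \N^\rb \to \N^\rb$ and $y : \N \to \N$ with $x \ \R^\alpha \ y$, we have $f^\rb(x) \ \R^\alpha \ f(y)$.

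Next I would apply Lemma~\ref{lm:generic}, which provides precisely an element related to $\alpha$, namely $\Omega \ \R^\alpha \ \alpha$. Instantiating the unfolded relation above with $x \eqdef \Omega$ and $y \eqdef \alpha$ yields $f^\rb(\Omega) \ \R^\alpha \ f(\alpha)$, and unfolding $\R_\N^\alpha$ one final time gives exactly the claimed equation $f^\rb(\Omega)(\alpha) = f(\alpha)$. Since $\alpha$ was arbitrary, this completes the argument.

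I do not expect any real obstacle here, as both ingredients are already established; the only point requiring care is the type bookkeeping, namely recognising that feeding $f^\rb$ the argument $\Omega$ is legitimate only because $\Omega$ is $\R^\alpha$-related to $\alpha$, which is what the definition of $\Omega$ was designed to guarantee. In this sense the theorem is just the observation that $\Omega$ is the missing piece needed to turn the fundamental theorem of logical relations into a concrete pointwise equality.
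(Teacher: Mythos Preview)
Your proposal is correct and follows exactly the route the paper takes: the paper's proof simply says the result ``follows directly from Lemmas~\ref{lm:logical:relation} and~\ref{lm:generic}'', and you have spelled out precisely how those two lemmas compose (closedness making the hypothesis of Lemma~\ref{lm:logical:relation} vacuous, then instantiating with $\Omega$ via Lemma~\ref{lm:generic} and unfolding $\R_\N^\alpha$). There is nothing to add or correct.
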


\section{Continuity of $\T$-definable functionals}
\label{sec:cont}
After ``precooking''~$\T$ as above, we can now carry out the steps of the usual syntactic method. The crucial difference is that the predicate to work with is defined instead on elements of the $\rb$-translated types, so that continuity of functions $(\N \to \N) \to \N$ becomes the base case of the predicate. With a proof by induction on terms, we show that the $\rb$-translation of any term in $\T$ satisfies the predicate. The case for closed terms of type $(\N \to \N) \to \N$ will bring us the desired result.

Recall that a function $f : (\N \to \N) \to \N$ is \emph{continuous} if for any sequence $\alpha : \N \to \N$ there exists $m : \N$, called a \emph{modulus of continuity} of $f$ at the point $\alpha$, such that any sequence $\beta : \N \to \N$ which is equal to $\alpha$ up to the first $m$ positions gives the same result. In \eg~$\mathrm{HA}^\omega$, the continuity of $f$ can be formulated as
\[
\forall \alpha^{\N \to \N} \ \exists m^\N \ \forall \beta^{\N \to \N} \ \left( \alpha =_m \beta \to f(\alpha) = f(\beta) \right)
\]
where $\alpha =_m \beta$ stands for $\forall i < m \ \alpha_i = \beta_i$. It is obvious that continuity is preserved under pointwise equality in the sense that if $f : (\N \to \N) \to \N$ is continuous then so is any function that is pointwise equal to~$f$.

We define a continuity predicate on elements of the $\rb$-translated types and show that the $\rb$-translation of any term in $\T$ satisfies the predicate.

\begin{defi}
\label{def:c:predicate}
We define a unary predicate $\C_\rho \subseteq \rho^\rb$ inductively on~$\rho$ by
\[
\begin{array}{rll}
\C_\N(f) & \eqdef & \text{$f$ is continuous} \\[2pt]
\C_{\sigma \to \tau} (g) & \eqdef & \forall x^{\sigma^\rb} \left( \C_{\sigma} (x) \to \C_\tau (g(x)) \right).
\end{array}
\]
\end{defi}

\begin{lem}
\label{lm:C}
For any term $t : \rho$ in $\T$, we have
\[
\C_\rho(t^\rb)
\]
assuming $\C(x^\rb)$ for all $x \in \mathrm{FV}(t)$.
\end{lem}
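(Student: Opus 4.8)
The plan is to proceed by structural induction on the term $t : \rho$, following exactly the same pattern as the proof of Lemma~\ref{lm:logical:relation}. For variables, the hypothesis $\C(x^\rb)$ gives the result immediately; for $\lambda$-abstraction and application, the result follows from the definition of $\C$ at function types together with the induction hypotheses. So the real content lies in the constants associated to the ground type $\N$, namely $0$, $\suc$, and $\rec$.

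First I would dispatch the two easy constants. For $0^\rb = \lambda\alpha.0$ we must check that the constant-zero function $(\N\to\N)\to\N$ is continuous, which is trivial (any modulus, say $0$, works at every point). For $\suc^\rb = \lambda f\alpha.\suc(f\alpha)$ we must show: if $f : \N^\rb$ is continuous then so is $\lambda\alpha.\suc(f\alpha)$; but post-composing with $\suc$ does not change the dependence on $\alpha$, so any modulus of continuity of $f$ at $\alpha$ is also one for $\lambda\alpha.\suc(f\alpha)$ at $\alpha$.

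The main work, as before, is the recursor. Here $\rec^\rb = \lambda a f.\ke(\rec(a)(\lambda k.f(\lambda\alpha.k)))$, and I would mirror the two-claim structure of Lemma~\ref{lm:logical:relation}. The first claim is that the Kleisli extension $\ke$ preserves $\C$: if $g : \N\to\rho^\rb$ satisfies $\C_\rho(g(i))$ for all $i:\N$, then $\C_{\N\to\rho}(\ke(g))$, i.e.\ $\forall f^{\N^\rb}\,(\C_\N(f) \to \C_\rho(\ke(g)(f)))$; this is proved by induction on $\rho$. The base case asks, given a continuous $f$, that $\lambda\alpha.g(f\alpha)(\alpha)$ be continuous: given $\alpha$, take a modulus $m_1$ of $f$ at $\alpha$ and a modulus $m_2$ of the continuous function $g(f\alpha)$ at $\alpha$, and check that $\max(m_1,m_2)$ works --- on agreement up to that bound, $\beta\mapsto f\beta$ agrees with $f\alpha$ and then $g(f\alpha)$ gives the same value. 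The step case of this $\rho$-induction is the routine unfolding already used for $\ke_{\sigma\to\tau}$. The second claim is that, given continuous $a : \rho^\rb$ (i.e.\ $\C_\rho(a)$) and $f : \N^\rb\to\rho^\rb\to\rho^\rb$ with $\C(f)$, we have $\C_\rho\bigl(\rec(a)(\lambda k.f(\lambda\alpha.k))(i)\bigr)$ for all $i:\N$; this is proved by induction on $i$, the base case being $\C_\rho(a)$ and the step case feeding $\C_\N(\lambda\alpha.n) = \C_\N(n^\rb)$ (trivially continuous) and the induction hypothesis into $\C(f)$. Combining the two claims --- feeding the family produced by claim~(2) into claim~(1) --- yields $\C_{(\N\to\N)\to\N\text{-ish}}(\rec^\rb)$, more precisely $\C(\rec^\rb)$ at the translated recursor type.

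The step I expect to be the main obstacle is the base case of claim~(1): verifying that $\lambda\alpha.g(f\alpha)(\alpha)$ is continuous, because this is where the two sources of dependence on $\alpha$ --- through the argument $f\alpha$ and through the explicit second occurrence of $\alpha$ --- must be combined, and one has to be careful that changing $\beta$ within the common modulus genuinely leaves $g(f\beta)(\beta) = g(f\alpha)(\alpha)$: first $f\beta = f\alpha$ by the modulus of $f$, so $g(f\beta) = g(f\alpha)$ on the nose, and then $g(f\alpha)(\beta) = g(f\alpha)(\alpha)$ by the modulus of the continuous function $g(f\alpha)$. This is the continuity-side analogue of the computation that makes $\Omega$ well-behaved, and everything else is bookkeeping.
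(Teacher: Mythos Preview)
Your proposal is correct and follows essentially the same approach as the paper: structural induction on $t$, with the only substantive case being $\rec$, handled via the same two claims (that $\ke$ preserves $\C$ by induction on $\rho$, and that the iterated recursor values satisfy $\C$ by induction on $i$), combined in the same way. Your base-case verification for claim~(1), taking $\max$ of the two moduli and chaining $g(f\alpha)(\alpha) = g(f\alpha)(\beta) = g(f\beta)(\beta)$, is exactly the paper's argument.
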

\begin{proof}
By induction on the term $t$. Here we prove only the case $t \equiv \rec$ with two claims.
\begin{enumerate}
\item The Kleisli extension $\ke$ preserves the predicate $\C$, \ie
\[
\C_{\N \to \rho}(\ke(g))
\]
for any $g : \N \to \rho^\rb$ such that $\C_\rho(g(i))$ for all $i : \N$.
\begin{proof}
By induction on $\rho$. (i)~$\rho = \N$. Given $g : \N \to \N^\rb$ with $g(i)$ continuous for all $i : \N$ and $f : \N^\rb$ continuous, we need to show that $\ke(g)(f)$ is continuous. Given $\alpha : \N \to \N$, let $m$ be the modulus of $g(f\alpha)$ at~$\alpha$ and let $n$ be the modulus of~$f$ at~$\alpha$. Take $k \eqdef \max(m,n)$. Given $\beta : \N \to \N$ with $\alpha =_k \beta$, we have
\[
\begin{array}{lllll}
\ke(g)(f)(\alpha) & = & g(f \alpha)(\alpha) \\
 & = & g(f\alpha)(\beta) & & (\text{by the continuity of }g(f\alpha)) \\
 & = & g(f\beta)(\beta) & & (f\alpha = f\beta \text{ by the continuity of }f) \\
 & = & \ke(g)(f)(\beta).
\end{array}
\]
(ii)~$\rho = \sigma \to \tau$. Given $g : \N \to \sigma^\rb \to \tau^\rb$ with $\C_{\sigma \to \tau}(g(i))$ for all $i:\N$, $f : \N^\rb$ with $\C_\N(f)$ and $a : \sigma^\rb$ with $\C_\sigma(a)$, we have to show $\C_{\tau}(\ke(g)(f)(a))$. Define $h : \N \to \tau^\rb$ by $h(i) \eqdef g(i)(a)$. Then we have $\ke(h)(f) = \ke(g)(f)(a)$ by definition and $\C_\tau(\ke(h)(f))$ by the induction hypothesis.
\end{proof}

\item Given $a : \rho^\rb$ with $\C_\rho(a)$ and $f : \N^\rb \to \rho^\rb \to \rho^\rb$ with $\C_{\N \to \rho \to \rho}(f)$, we have
\[
\C_{\rho}(\rec(a)(\lambda k.f(\lambda \alpha.k))(i))
\]
for all $i:\N$.
\begin{proof}
By induction on~$i$.
\end{proof}
\end{enumerate}
\noindent
By applying (1) to (2), we get a proof of $\C(\rec^\rb)$.
\end{proof}

Another important fact is that the generic sequence also satisfies the predicate $\C$.
\begin{lem}\label{lm:C:Omega}
We have
\[
\C_{\N \to \N}(\Omega).
\]
\end{lem}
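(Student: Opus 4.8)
The plan is to unfold Definition~\ref{def:c:predicate} and derive continuity of $\Omega(f)$ directly from continuity of $f$. Recall that $\C_{\N\to\N}(\Omega)$ unfolds to: for every $f : \N^\rb$ satisfying $\C_\N(f)$ --- that is, every continuous $f : (\N\to\N)\to\N$ --- the function $\Omega(f) : (\N\to\N)\to\N$ is continuous. Since $\Omega(f)(\alpha) \equiv \alpha(f\alpha)$, the task is, given such an $f$ and a point $\alpha : \N\to\N$, to exhibit a modulus of continuity of the map $\alpha' \mapsto \alpha'(f\alpha')$ at $\alpha$.

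First I would invoke the continuity of $f$ to obtain a modulus $n$ of $f$ at $\alpha$, so that $\alpha =_n \beta$ implies $f\alpha = f\beta$. Then I would put $k \eqdef f\alpha$ and claim that $m \eqdef \max(n, k+1)$ is a modulus of $\Omega(f)$ at $\alpha$. Indeed, for any $\beta : \N\to\N$ with $\alpha =_m \beta$: from $m \geq n$ we get $\alpha =_n \beta$, hence $f\beta = f\alpha = k$; and from $m \geq k+1$ we get $k < m$, hence $\alpha_k = \beta_k$. Therefore $\Omega(f)(\alpha) = \alpha(f\alpha) = \alpha_k = \beta_k = \beta(f\beta) = \Omega(f)(\beta)$, which is exactly what is required.

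I do not expect any genuine obstacle here; the argument is elementary. The only point that deserves care is that the bound must reach \emph{one position beyond} the value $f\alpha$ --- whence the $k+1$ --- because $\Omega$ inspects $\alpha$ precisely at the index computed by $f$, so knowing that $f$ stabilises is not enough; one also needs $\alpha$ and $\beta$ to agree at that index. It is worth remarking, as noted in the introduction, that this is essentially the same computation that establishes one direction of the Yoneda Lemma in the $\C$-space model, and that the corresponding statement for uniform continuity in Section~\ref{sec:uc} will follow the same pattern, with a uniform bound replacing the pointwise modulus.
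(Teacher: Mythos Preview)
Your argument is correct and coincides with the paper's own proof: both take the modulus of $\Omega(f)$ at $\alpha$ to be $\max$ of the modulus of $f$ at $\alpha$ and $f\alpha+1$, and verify this via the same two-step calculation (only the variable names $m$ and $n$ are swapped). Your closing remarks about the ``one position beyond'' subtlety and the Yoneda connection are apt and in line with the paper's discussion.
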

\begin{proof}
Suppose a continuous $f: (\N \to \N) \to \N$ is given. The goal is to show that $\Omega(f)$ is also continuous. Let $\alpha : \N \to \N$ be given. By the continuity of~$f$, we have a modulus~$m$ of~$f$ at the point~$\alpha$. Take $n \eqdef \max(m,f\alpha+1)$. Given $\beta : \N \to \N$ with $\alpha =_n \beta$, we have
\[
\begin{array}{lllll}
\Omega(f)(\alpha) & = & \alpha(f\alpha) \\
 & = & \beta (f \alpha) & & (\alpha =_n \beta \text{ and } f\alpha < f\alpha+1 \leq n) \\
 & = & \beta (f \beta) & & (f\alpha = f\beta \text{ by the continuity of $f$}) \\
 & = & \Omega(f)(\beta)
\end{array}
\]
and hence $\Omega(f)$ is continuous.
\end{proof}

\begin{thm}\label{thm:cont}
Every $\T$-definable function $f : (\N \to \N) \to \N$ is continuous.
\end{thm}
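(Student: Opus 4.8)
The plan is to assemble the three results already established above; essentially no new mathematics is needed. Given a $\T$-definable function $f : (\N \to \N) \to \N$, I would pick a closed term of type $(\N \to \N) \to \N$ denoting it, also written $f$. Since this term is closed it has no free variables, so the hypothesis of Lemma~\ref{lm:C} is vacuously satisfied and we obtain $\C_{(\N \to \N) \to \N}(f^\rb)$.

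Next I would unfold this predicate according to Definition~\ref{def:c:predicate}. Since $(\N \to \N)^\rb = \N^\rb \to \N^\rb$, the statement $\C_{(\N \to \N) \to \N}(f^\rb)$ reads
\[
\forall x^{\N^\rb \to \N^\rb} \left( \C_{\N \to \N}(x) \to \C_\N(f^\rb(x)) \right).
\]
Instantiating $x$ with the generic element $\Omega : \N^\rb \to \N^\rb$ and discharging the premise by Lemma~\ref{lm:C:Omega}, we get $\C_\N(f^\rb(\Omega))$; that is, the function $f^\rb(\Omega) : (\N \to \N) \to \N$ is continuous.

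Finally I would transfer continuity from $f^\rb(\Omega)$ back to $f$. By Theorem~\ref{thm:eq} we have $f^\rb(\Omega)(\alpha) = f(\alpha)$ for every $\alpha : \N \to \N$, so $f$ and $f^\rb(\Omega)$ are pointwise equal; since continuity is preserved under pointwise equality (as observed just before Definition~\ref{def:c:predicate}), $f$ is continuous, which finishes the proof.

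I do not expect any genuine obstacle: all the real content has been packaged into Lemmas~\ref{lm:C} and~\ref{lm:C:Omega} and Theorem~\ref{thm:eq}. The only point requiring a moment's care is the bookkeeping in the type-indexed unfolding — confirming that the argument position of $f^\rb$ is exactly the type $\N^\rb \to \N^\rb$ that $\Omega$ inhabits, so that Lemma~\ref{lm:C:Omega} and Theorem~\ref{thm:eq} apply without any coercion. One could equivalently phrase the whole theorem as the specialization of Lemma~\ref{lm:C} to closed terms of type $(\N\to\N)\to\N$, combined with the generic-element identity of Theorem~\ref{thm:eq}.
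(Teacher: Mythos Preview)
Your proposal is correct and follows exactly the same route as the paper: apply Lemma~\ref{lm:C} and Lemma~\ref{lm:C:Omega} to conclude that $f^\rb(\Omega)$ is continuous, then transfer continuity to $f$ via the pointwise equality of Theorem~\ref{thm:eq}. The paper's proof is simply a terser statement of the same three-step argument you spelled out.
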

\begin{proof}
By Lemmas~\ref{lm:C} and~\ref{lm:C:Omega}, we know $f^\rb(\Omega)$ is continuous. Then $f$ is also continuous, because $f$ and $f^\rb(\Omega)$ are pointwise equal by Theorem~\ref{thm:eq}.
\end{proof}

\section{Other notions of continuity}
\label{sec:other:cont}

\subsection{T-definable moduli of continuity}
\label{sec:t:mod}
It is also well-known that any $\T$-definable function $f : (\N \to \N) \to \N$ has a $\T$-definable \emph{modulus of continuity}~\cite[Theorem~2.7.8]{troelstra:model}, that is, a function $M : (\N \to \N) \to \N$ such that
\[
\forall \alpha^{\N \to \N} , \beta^{\N \to \N} \left( \alpha =_{M(\alpha)} \beta \to f(\alpha)=f(\beta) \right).
\]
Here we present two ways to recover this fact.

The first approach was suggested by Ulrich Kohlenbach. During a workshop\footnote{Workshop ``Proofs and Computation'', July 2-6 2018, Hausdorff Research Institute of Mathematics, Bonn, Germany. Website: \url{https://www.him.uni-bonn.de/application/types-sets-constructions/workshop-proofs-and-computation/}.} in Bonn, he pointed out that our Theorem~\ref{thm:cont} can be precisely formulated as
\begin{quote}
for each closed term $f : (\N \to \N) \to \N$ in~$\T$, $\mathrm{HA}^\omega$ proves that~$f$ is continuous
\end{quote}
whose proof remains the same. From the $\mathrm{HA}^\omega$ proof of the continuity of~$f$, we can extract a closed term $M : (\N \to \N) \to \N$ in~$\T$ and a proof in $\mathrm{HA}^\omega$ that~$M$ is a modulus of continuity of $f$, via the modified realizability~\cite[\S5]{kohlenbach:book}. 

We can also ``manually'' extract moduli of continuity by combining the construction with the $\rb$-translation similarly to the construction of general-bar-recursion functionals in~\cite{oliva:steila:BRCT}. For this, we extend System~$\T$ with product type $\sigma \times \tau$ (and a pairing function $\langle \,\text{-}\, \mathord{;} \,\text{-}\, \rangle$ and projections), and then adapt Definition~\ref{def:translation} to the following: The type translation $(\rho \mapsto \rho^\rbb)$ becomes
\[
\begin{array}{rll}
\N^\rbb & \eqdef & \left( (\N \to \N) \to \N \right) \times \left( (\N \to \N) \to \N \right) \\[2pt]
(\sigma \to \tau)^\rbb & \eqdef & \sigma^\rbb \to \tau^\rbb.
\end{array}
\]
For simplicity, we omit the trivial translation of product types and related constants. Note that each term~$w$ of type $\N^\rbb$ denotes a pair of functionals. We write
\begin{itemize}
\item $\rV_w : (\N \to \N) \to \N$ for the first component of~$w$, and
\item $\rM_w : (\N \to \N) \to \N$ for the second,
\end{itemize}
and hence have $w = \langle \rV_w ; \rM_w \rangle$. The idea is that $\rV_{w}$ is the \emph{value} (\ie~the $\rb$-translation) of some term while $\rM_{w}$ is a \emph{modulus of continuity}  of~$\rV_{w}$. Because function types are translated in the same way as in Definition~\ref{def:translation}, we need to change the term translation $(t \mapsto t^\rbb)$ only for the constants of $\N$:
\[
\begin{array}{rll}
0^\rbb & \eqdef & \langle \lambda \alpha.0 ; \lambda \alpha.0 \rangle \\[2pt]
\suc^\rbb & \eqdef & \lambda x. \langle \lambda \alpha. \suc(\rV_x(\alpha)) ; \rM_{x} \rangle  \\[2pt]
\rec^\rbb & \eqdef & \lambda a f .  \bke(\rec(a)(\lambda k.f \langle \lambda \alpha. k ; \lambda \alpha.0 \rangle))
\end{array}
\]
where the Kleisli extension $\bke_\rho : (\N \to \rho^\rbb) \to \N^\rbb \to \rho^\rbb$ is defined by
\[
\begin{array}{rll}
\bke_{\N}(g)(f) & \eqdef & \langle \lambda \alpha. \rV_{g(\rV_f(\alpha))}(\alpha) ; \lambda \alpha. \max(\rM_{g(\rV_f(\alpha))}(\alpha) , \rM_f(\alpha)) \rangle \\[2pt]
\bke_{\sigma \to \tau} (g)(f) & \eqdef & \lambda x. \bke_\tau (\lambda k. g(k)(x))(f).
\end{array}
\]
Note that the maximum function $\max : \N \times \N \to \N$ is primitive recursive and can be defined using $\rec$ in~$\T$. The ``value'' part of the above translation is exactly the $\rb$-translation. In the ``modulus'' part of the Kleisli extension $\bke_{\N}(g)(f)$, there are two potential moduli of continuity at $\alpha$: one is given by $g(\rV_f(\alpha))$ and the other by $f$. We of course take the greater one to be the modulus of continuity.

The generic sequence $\mathbf{\Omega} : \N^\rbb \to \N^\rbb$ is defined by
\[
\mathbf{\Omega} (f) \ \eqdef \ \langle \lambda \alpha. \alpha(\rV_f(\alpha)); \lambda \alpha. \max(\rM_f(\alpha), \rV_f(\alpha)+1) \rangle.
\]
For the ``modulus'' part, we again take the greater of the two potential moduli of continuity: one is given by the input~$f$, and the other is $\rV_f(\alpha)+1$ because $\alpha$ is applied to $\rV_f(\alpha)$.

We can show the following variant of Theorem~\ref{thm:eq}
\begin{quote}
any closed $f : (\N \to \N) \to \N$ in $\T$ is pointwise equal to $\rV_{f^\rbb(\mathbf{\Omega})}$
\end{quote}
using a parametrized logical relation $\mathbf{R}_\rho^\alpha \subseteq \rho^\rbb \times \rho$ that is almost the same as Definition~\ref{def:logical:relation} except that the base case is defined by
\[
f \ \mathbf{R}_\N^\alpha \ n \ \eqdef \ \rV_f(\alpha) = n.
\]
We also modify Definition~\ref{def:c:predicate} to get a predicate $\mathbf{C}_\rho \subseteq \rho^\rbb$ with the following base case
\[
\mathbf{C}_\N(f) \ \eqdef \ \rM_f \text{ is a modulus of continuity of } \rV_f.
\]
Similarly to the proof of Theorem~\ref{thm:cont}, by showing (i) $\mathbf{C}_\rho(t^\rbb)$ for all $t : \rho$ in $\T$ and (ii) $\mathbf{C}(\mathbf{\Omega})$, we can conclude that for any $f: (\N \to \N) \to \N$ in $\T$, the term $\rM_{f^\rbb(\mathbf{\Omega})}$ is a modulus of continuity of $f$. Note that $\rM_{f^\rbb(\mathbf{\Omega})}$ is exactly the term which is extracted from the proof of Theorem~\ref{thm:cont} via modified realizability.

\begin{thm}
\label{thm:t:mod}
If $f : (\N \to \N) \to \N$ is $\T$-definable, then it has a $\T$-definable modulus of continuity.
\end{thm}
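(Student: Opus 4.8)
The plan is to pursue either of the two routes already outlined before the statement; I would carry out the second, self-contained one in full and merely record the first. For the first route, reading Theorem~\ref{thm:cont} in Kohlenbach's way, it is literally the statement that for each closed $f : (\N\to\N)\to\N$ in $\T$ the system $\mathrm{HA}^\omega$ proves $\forall\alpha\,\exists m\,\forall\beta\,(\alpha=_m\beta \to f(\alpha)=f(\beta))$, with exactly the proof given there; applying modified realizability to that proof extracts a closed term $M : (\N\to\N)\to\N$ in $\T$ realizing the $\exists m$, \ie~a $\T$-definable modulus of continuity of~$f$, together with an $\mathrm{HA}^\omega$-proof that it is one. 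The rest of this sketch concerns the second route, which reproduces on the $\rbb$-translation the three ingredients behind Theorem~\ref{thm:cont}.

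Step~(i): prove the analogue of Theorem~\ref{thm:eq}, that any closed $f : (\N\to\N)\to\N$ in $\T$ is pointwise equal to $\rV_{f^\rbb(\mathbf{\Omega})}$. Using the parametrized logical relation $\mathbf{R}^\alpha_\rho \subseteq \rho^\rbb \times \rho$ with base case $f\ \mathbf{R}^\alpha_\N\ n \eqdef \rV_f(\alpha)=n$, one shows $t^\rbb\ \mathbf{R}^\alpha\ t$ for every term $t:\rho$ of $\T$ (assuming $x^\rbb\ \mathbf{R}^\alpha\ x$ on the free variables) by structural induction on~$t$, copying the proof of Lemma~\ref{lm:logical:relation}; the only case with content, $t\equiv\rec$, needs first that $\bke$ preserves $\mathbf{R}^\alpha$ (by induction on~$\rho$ — the $\rV$-component of $\bke$ is a copy of $\ke$ with $\rV$-projections inserted, so nothing new happens, and the $\rM$-component is irrelevant to $\mathbf{R}^\alpha$), then an induction on the numeral. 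Unfolding $\mathbf{\Omega}\ \mathbf{R}^\alpha\ \alpha$ reduces to the implication $\rV_f(\alpha)=n \Rightarrow \rV_{\mathbf{\Omega}(f)}(\alpha)=\alpha(n)$, which holds since $\rV_{\mathbf{\Omega}(f)}(\alpha)=\alpha(\rV_f(\alpha))$; combining, $\rV_{f^\rbb(\mathbf{\Omega})}(\alpha)=f(\alpha)$ for all~$\alpha$.

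Step~(ii): prove the analogue of Lemma~\ref{lm:C}. With $\mathbf{C}_\N(f)$ defined to mean that $\rM_f$ is a modulus of continuity of $\rV_f$, and function types as in Definition~\ref{def:c:predicate}, show $\mathbf{C}_\rho(t^\rbb)$ for every term $t:\rho$, by induction on~$t$, the only nontrivial case being $t\equiv\rec$. As in Lemma~\ref{lm:C}, split it into (a)~$\mathbf{C}_{\N\to\rho}(\bke(g))$ whenever $\mathbf{C}_\rho(g(i))$ for all~$i$, by induction on~$\rho$, and (b)~$\mathbf{C}_\rho\bigl(\rec(a)(\lambda k.f\langle\lambda\alpha.k;\lambda\alpha.0\rangle)(i)\bigr)$ for all~$i$, by induction on~$i$, where one uses that $\langle\lambda\alpha.k;\lambda\alpha.0\rangle$ satisfies $\mathbf{C}_\N$ because $\lambda\alpha.0$ is trivially a modulus of the constant $\lambda\alpha.k$. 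For the base case $\rho=\N$ of~(a): given $g$ with each $\rM_{g(i)}$ a modulus of $\rV_{g(i)}$, $f$ with $\rM_f$ a modulus of $\rV_f$, and $\beta$ agreeing with $\alpha$ up to $\rM_{\bke(g)(f)}(\alpha)=\max(\rM_{g(\rV_f(\alpha))}(\alpha),\rM_f(\alpha))$, one has
\[
\rV_{\bke(g)(f)}(\alpha)=\rV_{g(\rV_f(\alpha))}(\alpha)=\rV_{g(\rV_f(\alpha))}(\beta)=\rV_{g(\rV_f(\beta))}(\beta)=\rV_{\bke(g)(f)}(\beta),
\]
the second equality by the first $\max$-summand, the fourth because $\rM_f(\alpha)\le\max$ forces $\rV_f(\alpha)=\rV_f(\beta)$; the inductive step of~(a) and case~(b) go through as in Lemma~\ref{lm:C}. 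Step~(iii): prove the analogue of Lemma~\ref{lm:C:Omega}, $\mathbf{C}(\mathbf{\Omega})$ — given $f$ with $\rM_f$ a modulus of $\rV_f$ and $\beta$ agreeing with $\alpha$ up to $\rM_{\mathbf{\Omega}(f)}(\alpha)=\max(\rM_f(\alpha),\rV_f(\alpha)+1)$, the first summand gives $\rV_f(\alpha)=\rV_f(\beta)$ and $\rV_f(\alpha)+1\le\max$ gives $\alpha(\rV_f(\alpha))=\beta(\rV_f(\alpha))$, so $\rV_{\mathbf{\Omega}(f)}(\alpha)=\alpha(\rV_f(\alpha))=\beta(\rV_f(\alpha))=\beta(\rV_f(\beta))=\rV_{\mathbf{\Omega}(f)}(\beta)$.

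To conclude, for $\T$-definable $f:(\N\to\N)\to\N$ apply Step~(ii) to the closed term~$f$ and instantiate at $\mathbf{\Omega}$, which is legitimate by Step~(iii); this gives $\mathbf{C}_\N(f^\rbb(\mathbf{\Omega}))$, \ie~$\rM_{f^\rbb(\mathbf{\Omega})}$ is a modulus of continuity of $\rV_{f^\rbb(\mathbf{\Omega})}$. By Step~(i) the latter is pointwise equal to~$f$, and a modulus of continuity of a function is immediately a modulus of continuity of any function pointwise equal to it, so $\rM_{f^\rbb(\mathbf{\Omega})}$ — which is precisely the term extracted via modified realizability in the first route, and hence denotes a $\T$-definable functional — is a modulus of continuity of~$f$. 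Since the whole argument is a transcription of Theorem~\ref{thm:eq}, Lemma~\ref{lm:C} and Lemma~\ref{lm:C:Omega} with $\rV$/$\rM$ bookkeeping added, I expect no genuine obstacle; the one point that repays care is showing that $\bke$ preserves $\mathbf{C}$ in Step~(ii), where one must thread the modulus data through the Kleisli extension and check that the $\max$ of the two candidate moduli produced by $\bke_\N$ — and, in Step~(iii), by $\mathbf{\Omega}$ — really is a common modulus, which is exactly why those maxima sit in the definitions.
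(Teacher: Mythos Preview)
Your proposal is correct and follows essentially the same route as the paper: the paper's proof is the discussion preceding the theorem, which records both the modified-realizability route and the direct $\rbb$-translation route, and you have faithfully expanded the latter into the three ingredients (the $\mathbf{R}^\alpha$-analogue of Theorem~\ref{thm:eq}, the $\mathbf{C}$-analogue of Lemma~\ref{lm:C}, and $\mathbf{C}(\mathbf{\Omega})$). Your explicit verifications that $\bke_\N$ and $\mathbf{\Omega}$ preserve $\mathbf{C}$ via the two $\max$-summands are exactly the computations the paper leaves implicit.
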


\subsection{Uniform continuity}
\label{sec:uc}
Because each $\T$-definable function $f : (\N \to \N) \to \N$ is continuous, so is its restriction $f|_\Cantor : (\N \to \Two) \to \N$. Using our syntactic method, we can show that these restrictions are actually uniformly continuous.

Recall that a function $f : (\N \to \Two) \to \N$ is \emph{uniformly continuous} if there exists $m : \N$, called a \emph{modulus of uniform continuity} of~$f$, such that any binary sequences $\alpha, \beta : \N \to \Two$ that agree at the first $m$ positions give the same result. The uniform continuity of~$f$ can be formulated in $\mathrm{HA}^\omega$ as
\[
\exists m^\N \, \forall \alpha^{\N \to \Two}, \beta^{\N \to \Two} \left( \alpha =_m \beta \to f(\alpha)=f(\beta) \right).
\]
The following lemma of uniform continuity will play an important role in our proof.
\begin{lem}
\label{lm:uc:mi}
For any $m : \N$ and $f : (\N \to \Two) \to \N$, if~$m$ is a modulus of uniform continuity of~$f$, then~$f$ has a maximum image.
\end{lem}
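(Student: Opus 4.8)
The plan is to prove this by induction on the modulus~$m$, exploiting the fact that if $m$ is a modulus of uniform continuity of~$f$ then $f(\alpha)$ depends only on the first $m$ values of~$\alpha$, so the image of~$f$ is the image of a finite set of binary sequences. Concretely I would construct, by recursion on~$m$, a point $\alpha_0 : \Cantor$ together with a proof that $f(\beta) \le f(\alpha_0)$ for every $\beta : \Cantor$; I take this to be the precise meaning of ``$f$ has a maximum image'', namely that the value $f(\alpha_0)$ is the maximum of the image set.

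For the base case $m = 0$ the hypothesis says $f(\alpha) = f(\beta)$ for all $\alpha,\beta$, since $\alpha =_0 \beta$ holds vacuously; hence $f$ is constant and $\alpha_0 \eqdef \lambda i.\,0$ works, with $f(\beta) = f(\alpha_0)$. For the step $m = m' + 1$, I would consider the two functions $f_b : \Cantor \to \N$ defined by $f_b(\alpha) \eqdef f(b \mathbin{::} \alpha)$ for $b : \Two$, where $b \mathbin{::} \alpha$ prepends~$b$. Each $f_b$ has $m'$ as a modulus of uniform continuity: from $\alpha =_{m'} \beta$ we get $b \mathbin{::} \alpha =_{m} b \mathbin{::} \beta$ and hence $f_b(\alpha) = f_b(\beta)$. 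By the induction hypothesis each $f_b$ has a maximising point $\gamma_b$. Comparing the two natural numbers $f_0(\gamma_0)$ and $f_1(\gamma_1)$ — a decidable comparison — I pick the index $b_0$ with the larger value and set $\alpha_0 \eqdef b_0 \mathbin{::} \gamma_{b_0}$. Then for an arbitrary $\beta$, writing $\beta = \beta_0 \mathbin{::} \beta'$ (head and tail), we have $f(\beta) = f_{\beta_0}(\beta') \le f_{\beta_0}(\gamma_{\beta_0}) \le f_{b_0}(\gamma_{b_0}) = f(\alpha_0)$, as desired.

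The argument is entirely constructive: the only ingredients are the decidability of $\le$ on~$\N$ and a finite recursion on~$m$, both available in $\mathrm{HA}^\omega$. I expect the only (minor) obstacle to be the bookkeeping around the prepend/tail operations on sequences and the careful verification that the modulus really drops from $m$ to $m'$ under restriction; there is no deep difficulty here. An essentially equivalent alternative would be to observe directly that the image of~$f$ equals the finite set $\{\, f(s \mathbin{\frown} \lambda i.\,0) : s \in \Two^{m} \,\}$ and then invoke the fact that an inhabited finite set of natural numbers has a maximum, but unfolding that fact amounts to the same induction on~$m$.
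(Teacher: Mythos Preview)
Your proposal is correct and follows exactly the approach indicated in the paper, namely induction on~$m$; the paper's proof is in fact just the single line ``By induction on $m$.'' Your detailed unfolding of the induction (splitting on the head bit, applying the induction hypothesis to each branch, and taking the larger of the two maxima) is precisely what that line abbreviates.
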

\begin{proof}
By induction on $m$.
\end{proof}

We sketch out our proof of uniform continuity only with the key steps. Firstly we define a predicate $\UC_\rho \subseteq \rho^\rb$ by
\[
\begin{array}{rll}
\UC_\N(f) & \eqdef & \text{$f|_\Cantor$ is uniformly continuous} \\[2pt]
\UC_{\sigma \to \tau} (g) & \eqdef & \forall x^{\sigma^\rb} \left( \UC_{\sigma} (x) \to \UC_\tau (g(x)) \right).
\end{array}
\]
Then we show
\begin{itemize}
\item $\UC_\rho(t^\rb)$ for all $t : \rho$ in $\T$, assuming $\UC(x^\rb)$ for all $x \in \mathrm{FV}(t)$; and
\item $\UC_{\N \to \N}(\Omega)$.
\end{itemize}
Both are proved using Lemma~\ref{lm:uc:mi}. They together lead to the desired result.

\begin{thm}
\label{thm:uc}
If $f : (\N \to \N) \to \N$ is $\T$-definable, then its restriction $f|_\Cantor : {(\N \to \Two) \to \N}$ is uniformly continuous.
\end{thm}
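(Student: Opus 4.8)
The plan is to imitate the proof of Theorem~\ref{thm:cont} step by step, with the continuity predicate $\C$ replaced by the predicate $\UC$ defined above. Concretely, I would establish two facts. First, the analogue of Lemma~\ref{lm:C}: $\UC_\rho(t^\rb)$ holds for every term $t:\rho$ in $\T$, under the hypothesis $\UC(x^\rb)$ for each $x\in\mathrm{FV}(t)$. Second, the analogue of Lemma~\ref{lm:C:Omega}: $\UC_{\N\to\N}(\Omega)$. Given a closed $f:(\N\to\N)\to\N$ in $\T$, the first fact (with no free-variable hypotheses to discharge) yields $\UC_{\N^\rb\to\N^\rb}(f^\rb)$, and applying this to $\Omega$ via the second fact yields $\UC_\N(f^\rb(\Omega))$, i.e.\ $(f^\rb(\Omega))|_\Cantor$ is uniformly continuous. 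Since $f^\rb(\Omega)$ and $f$ are pointwise equal by Theorem~\ref{thm:eq} (in particular on binary sequences), and uniform continuity, just like continuity, is preserved under pointwise equality, $f|_\Cantor$ is uniformly continuous.

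For the first fact, the induction on $t$ is routine except for $t\equiv\rec$, which I would split into the same two claims as in Lemma~\ref{lm:C}: (1)~$\ke$ preserves $\UC$, i.e.\ $\UC_{\N\to\rho}(\ke(g))$ whenever $\UC_\rho(g(i))$ for all $i:\N$; and (2)~$\UC_\rho\bigl(\rec(a)(\lambda k.f(\lambda\alpha.k))(i)\bigr)$ for all $i:\N$; then apply (1) to (2). Claim~(2) is an induction on $i$: the base case is $\UC_\rho(a)$, and the step uses that a constant sequence $\lambda\alpha.i:\N^\rb$ trivially satisfies $\UC_\N$ (its restriction to $\Cantor$ is constant) together with $\UC_{\N\to\rho\to\rho}(f)$. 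Claim~(1) is an induction on $\rho$; the function-space case is exactly as in Lemma~\ref{lm:C}(1)(ii), and the only real work is the base case $\rho=\N$: given $g:\N\to\N^\rb$ with each $g(i)|_\Cantor$ uniformly continuous and $f:\N^\rb$ with $f|_\Cantor$ uniformly continuous, I must produce a \emph{single} modulus of uniform continuity for $(\ke(g)(f))|_\Cantor$, where $\ke(g)(f)(\alpha)=g(f\alpha)(\alpha)$. This is where Lemma~\ref{lm:uc:mi} enters: let $N$ be a maximum image of $f|_\Cantor$ and $n$ a modulus of uniform continuity of it, and let $m$ be the maximum of moduli of uniform continuity of $g(0)|_\Cantor,\dots,g(N)|_\Cantor$; then $\max(m,n)$ works, because for binary $\alpha=_{\max(m,n)}\beta$ we get $f\alpha=f\beta\le N$, hence $g(f\alpha)(\alpha)=g(f\alpha)(\beta)=g(f\beta)(\beta)$.

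For the second fact, $\UC_{\N\to\N}(\Omega)$, the proof mirrors Lemma~\ref{lm:C:Omega} but again replaces the point-dependent choice of modulus by a uniform one. Given $f:\N^\rb$ with $f|_\Cantor$ uniformly continuous, I take a modulus of uniform continuity $n$ of $f|_\Cantor$ and a maximum image $N$ of it (Lemma~\ref{lm:uc:mi}), and claim $\max(n,N+1)$ is a modulus of uniform continuity of $\Omega(f)|_\Cantor$, where $\Omega(f)(\alpha)=\alpha(f\alpha)$: for binary $\alpha=_{\max(n,N+1)}\beta$ we have $f\alpha=f\beta$ and $f\alpha\le N<N+1\le\max(n,N+1)$, so $\alpha(f\alpha)=\beta(f\alpha)=\beta(f\beta)$.

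I expect the main obstacle to be precisely the recursor case, and within it the base case of claim~(1). In the pointwise setting of Lemma~\ref{lm:C} the modulus is chosen after $\alpha$ is fixed, so the ``index'' $f\alpha$ is merely a number and no boundedness is required; in the uniform setting the modulus must be chosen uniformly in $\alpha$, so one genuinely needs that $f\alpha$ ranges over a finite set as $\alpha$ ranges over $\Cantor$ — which is exactly the content of Lemma~\ref{lm:uc:mi}. Once that lemma is in hand, the rest is the same bookkeeping (taking maxima of finitely many moduli) as in Section~\ref{sec:cont}.
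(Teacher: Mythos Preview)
Your proposal is correct and follows exactly the approach the paper sketches: establish $\UC_\rho(t^\rb)$ for all terms and $\UC_{\N\to\N}(\Omega)$, both via Lemma~\ref{lm:uc:mi}, then conclude using Theorem~\ref{thm:eq}. You have in fact supplied more detail than the paper does, correctly identifying the two places where Lemma~\ref{lm:uc:mi} is needed (the base case of the $\ke$-preservation claim and the proof that $\Omega$ preserves $\UC$) and giving the right moduli in each case.
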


Using the Dialectica interpretation and his pointwise version of strong majorization, Kohlenbach~\cite{kohlenbach:majorization} obtains a more general result: for each term $f : (\N \to \N) \to \N$ in~$\T$, he constructs a term $\Phi:(\N \to \N) \to \N$ in~$\T$ such that $\Phi(\gamma)$ is a modulus of uniform continuity of~$f$ on $\{ \alpha^{\N \to \N} \,|\, \forall i^\N \, \alpha_i \leq \gamma_i \}$. Our syntactic method can also provide a construction of such moduli of uniform continuity (see our Agda implementation~\cite{xu:gt:agda}).

\section{Generalization}
\label{sec:dis}

The key step of our syntactic approach to continuity is the $\rb$-translation: natural numbers are translated to functionals from the Baire type $\N \to \N$ so that continuity becomes the base case of the predicate to work with. The translation can be generalized by replacing the Baire type by arbitrary finite type~$X$. In specific, the type translation $\rho \mapsto \rho^X$ is defined by
\[
\begin{array}{rll}
\N^X & \eqdef & X \to \N \\[2pt]
(\sigma \to \tau)^X & \eqdef & \sigma^X \to \tau^X
\end{array}
\]
while the term translation remains the same as in Definition~\ref{def:translation} (with the superscribed $\rb$ replaced by $X$). The parametrized logical relation $\R_\rho^\alpha \subseteq \rho^X \times \rho$ for the ``$X$-translation'' is defined exactly the same as in Definition~\ref{def:logical:relation} except that the parameter~$\alpha$ is of type~$X$ instead of $\N \to \N$, and similarly Lemma~\ref{lm:logical:relation} for it can be shown with the same proof. The generic sequence $\Omega$ which in this case has type $X^X$ (\ie~the $X$-translation of $X$) can be defined by unfolding $\Omega \ \R^\alpha \ \alpha$ when~$X$ is given concretely. For instance, when $X \equiv \N$ we define $\Omega : \N \to \N$ by $\Omega (n) \eqdef n$, as $\Omega \ \R^n \ n$ is unfolded to $\Omega(n) = n$.

Therefore, our method can be generalized for proving various properties of $\T$-definable functions of \emph{arbitrary} finite type: Suppose the goal is to prove a certain property of $\T$-definable functions $X \to \N$. As discussed above, we already have the ``$X$-translation'' of~$\T$. Then we define a predicate $P_\rho \subseteq \rho^X$ whose base case $P_\N(f)$ expresses the targeting property of $f : X \to \N$. Once we show $P_\rho(t^X)$ for all $t : \rho$ in~$\T$, we achieve the goal. For instance, when taking $X \equiv \N \to \N$ (or $\N \to \N \to \N$), the above method is essentially Oliva and Steila's technique of proving the closure property of System $\T$ terms under the rule of Spector's bar recursion of type~0 (respectively~1)~\cite{oliva:steila:BRCT}. When taking $X \equiv \N$, we should be able to use the above method to prove Schwichtenberg's theorem that every $\T$-definable function $\N \to \N$ is eventually bounded by the slow growing hierarchy~\cite[Theorem~4.8]{schwichtenberg:proofs} (but this is left as a future task). We have not found applications when $X$ is higher than~1.



\section*{Acknowledgment}
This research was supported by the Alexander von Humboldt Foundation. A part of this paper was written during the trimester ``Types, Sets and Constructions'' at the Hausdorff Research Institute for Mathematics (HIM), University of Bonn, May to August 2018. This visit was supported by HIM. Both this support and the hospitality of HIM are gratefully acknowledged. The author is grateful to Paulo Oliva and Mart\'in Escard\'o for their motivations, and to Thierry Coquand, Peter Dybjer, Fredrik Nordvall Forsberg, Ulrich Kohlenbach, Sam Sanders, Helmut Schwichtenberg and the anonymous reviewers for various useful comments and suggestions.

\bibliographystyle{plain}
\bibliography{mybib}

\end{document}